\documentclass{article}
\usepackage{mathrsfs}
\usepackage{graphicx}
\usepackage{amsmath}
\usepackage{amssymb}
\usepackage{txfonts}
\usepackage[all, pdf] {xy}
\usepackage{tikz}

\newtheorem{thm}{Theorem}[section]

\newtheorem{lem}[thm]{Lemma}

\begin{document}
	
	\begin{center}
		
		{\Large \bf On isomorphisms of tetravalent Cayley  graphs over generalized quaternion groups\footnote{Corresponding author: W. Meng ( mlwhappyhappy@163.com).
				W. Meng is supported by National Natural Science Foundation of China (12661001),  Guangxi Natural Science
				Foundation Program (2026GXNSFAA00641090)  and Guangxi Colleges and Universities Key Laboratory of Data Analysis and Computation.}}	
	\end{center}
	
	\vskip0.5cm
	
	\begin{center}
		 Lele Jin$^a$ , Li Ma$^b$, Wei Meng$^a$

		 a.  School of Mathematics and Computing Science,  Center for Applied Mathematics of Guangxi(GUET), Guilin University of Electronic Technology, Guilin, Guangxi, 541002, P.R. China.
		
		 b. School of Mathematics and Statistics, Qujing Normal University, Qujing, Yunnan, 655000, P.R. China.

		 E-mails:  1845973655@qq.com, 821962782@qq.com, mlwhappyhappy@163.com.
	\end{center}

	\begin{abstract}
		Let $G$ be a finite group and $S$ be a subset of $G$ with $1\not\in S$.  $S$ is called a $CI$-subset of $G$ if   for any $T\subset G$, whenever $Cay (G,S)\cong Cay (G,T)$, there is an element $\sigma\in Aut(G)$ such that $S^{\sigma}=T$.   For a positive integer $m$, $G$ is called an $m$-$CI$-group if all  subsets  $S$ of $G \setminus \{1\}$  with $|S|\leq  m$ and $S^{-1}=S$ are $CI$-subsets. In this paper, we show that  the generalized quaternion group of order $4n$ is a $4$-$CI$-group if and only if  $n$ is odd.
	\end{abstract}
	
	{\small Keywords:   Cayley graph, $CI$-group, $m$-$CI$-group}
	
	{\small MSC(2020): 05C25, 20B25 }
	
	\section{Introduction}

Let $G$ be a finite group  and  $S$ be a subset of $G$ with $1\notin S$.  A Cayley digraph $Cay(G,S)$  is the direct graph  with vertex set $G$ and arc set $\{(g,sg)\mid g\in G,\ s\in S\}$.  In addition, $S=S^{-1}$, the pair of arcs $(g, sg)$ and $(sg, g)$ can be identified with the undirected edge $\{g, sg\}$,
in which case $Cay(G, S)$ is regarded as an undirected graph and referred to as a Cayley
graph. A Cayley (di)graph $Cay(G,S)$ is called a $CI$-(di)graph if  for any $T\subset G$, whenever $Cay(G, S) \cong Cay(G, T)$, there is an element $\sigma \in  Aut(G)$ such that $S^{\sigma} = T$. The group $G$ has the $m$-$DCI$-property
($m$-$CI$-property, resp.) if all Cayley digraphs (Cayley graphs, resp.) on $G$ of valency $m$ are
$CI$-digraphs ($CI$-graphs, resp.). By the valency of a Cayley digraph
$Cay (G, S)$ we mean its in-valency, which is equal to the cardinality $|S|$ and coincides with
its out-valency. Furthermore, $G$ is called an $m$-$(D)CI$-group if it has the $i$-$(D)CI$ property
for every $i \leq m$ and a $D(CI)$-group when it has the $i$-$(D)CI$ property for every positive
integer $i\leq |G|-1$.
	
	 In 1967, \'{A}d\'{a}m  \cite {ADA} conjectured that all finite cyclic groups
are $DCI$-groups.    Although this conjecture was disproved by Elspas and Turner\cite {ELS}, considerable energy has been devoted to seeking cyclic $DCI$-groups (see Babai\cite{BAB1}, Alspach
and Parsons \cite {ALS1}, and Godsil \cite {GOD}). Later, a complete classification of cyclic
$DCI$-groups was finally obtained by Muzychuk  \cite {MUZ1,MUZ2}, who proved that a cyclic group $C_n$ is a $DCI$-group if and only if $n =ab$, where $a |4$ and $b$ is a square-free odd number.  Moreover,  many results on abelian $(D)CI$-groups can be found in \cite {ALS2, FENG, HIR, KOV1,KOV2,KOV3, LI9, MUZ3, RYA, SOM}. Babai \cite{BAB1} proved that the dihedral groups  $D_{2p}$ of order $2p$ are non-abelian $CI$-groups, where $p$ is a prime.    Afterwards, Babai and Frankl \cite{BAB2, BAB3} deeply explored $CI$-groups of odd order and insoluble $CI$-groups. Li at al. extended the study of $CI$-groups and developed the theory of $(D)CI$-groups (see \cite {CON, LI3,LI4,LI5,LI6,LI7,LI8,LI10}.

On the other hand,  there are also some
results on $m$-$DCI$-groups with small $m$.  For example, Sun \cite {SUN} showed that every cyclic group is a $2$-$DCI$-group. Li \cite {LI1}
showed that every cyclic group is a $5$-$CI$-group.
 Fang and Xu \cite {FANG1,FANG2,FANG3} classified
abelian $m$-$DCI$-groups for every $m\in\{1,2,3\}$, and Qu and Yu \cite {QU} proved that for an
integer $n\geq3$ and $m\in\{1, 2, 3\}$, a dihedral group of order $2n$ is an $m$-$DCI$-group if
and only if $n$ is odd. Ma \cite{MA} showed that for $m \in \{1, 2, 3\}$, the generalized quaternion group $Q_{4n}(n\neq2)$ is an $m$-$DCI$-group if and only if $n$ is odd.

The study of finite groups with the $m$-$(D)CI$ property was initiated by Li \cite {LI2}, who  presented a necessary condition for cyclic groups with the
$m$-$DCI$ property and conjectured that the condition is also sufficient, which has been
confirmed by Dobson \cite {DOB}.  Li, Praeger and Xu \cite {LI5} gave a sufficient and necessary
condition for all finite abelian groups with the $m$-$DCI$ property for $1 \leq m \leq 4$,
where they proposed the following problem: characterize finite groups with the $m$-DCI
property.  Recently, combining the results of Li and Dobson,     Kov\'acs and   \v{S}inkovec \cite {KOV4} obtained a  complete  classification of cyclic $m$-$DCI$-groups and $m$-$CI$-groups.

In 2024, Xie, Feng and Kwon \cite{XIE1} gave a necessary condition for dihedral groups $D_{2n}$ to have the
$m$-$DCI$ property, and a sufficient and necessary condition when $n$ is a prime power. They showed that  $D_{2n}$ has the $m$-$DCI$ property for some $1 \le m \le n-1$, then $n$ is odd and is not divisible by the square of any prime smaller than $m$; and $D_{2p^e}$   has the $m$-$DCI$ property if
and only if either $e=1$, or $p$ is odd and $1 \leq  m \leq p$, where $p$ is an odd prime.    Furthermore, Xie, Lu and Feng \cite {XIE2}  obtained  a sufficient and necessary condition for dihedral groups $D_{2n}$ to have $4$-$DCI$-group:   $D_{2n}(n\geq3)$ is a $4$-$DCI$-group if and only if $n$ is odd and not divisible by $9$.

Recall that  a  generalized quaternion group $Q_{4n}$ of order $4n$  is defined by  the presentation:	
	\begin{center}
	 $Q_{4n} = \langle a, b \mid a^{2n} = 1,\ b^2 = a^n,\ b^{-1}ab = a^{-1} \rangle$, where $n\geq 2$.
	\end{center}
It is well-known that $Q_8$   is a  $DCI$-group.    Observe that Ma \cite {MA} have shown that for $m\in\{1,2,3\}$,  $Q_{4n}(n\neq2)$ is a $m$-$DCI$-group if and only if $n$ is odd.  Recently, Xie, Feng and Xia \cite {XIE3} investigated the $m$-$DCI$-property of $Q_{4n}$ and obtained  gave necessary conditions for the $m$-$DCI$ property of $Q_{4n}$. They showed that    if  $Q_{4n}(n\geq3)$ has  $m$-$DCI$-property  for $1\leq m\leq 2n-1$, then  $n$ is odd, and $n$ is not divisible by $p^2$ for any prime $p \leq m - 1$.  Moreover, they proved that if $Q_{4p^e}$ has $m$-$DCI$ property  for $1\leq m\leq 2p^e-1$ if and only if either $e=1$, or $m\leq p$, where $p$ is an odd prime.

 Note that Ma\cite {MA} also showed that  $Q_{4n}(n\neq)$ is a $3$-$CI$-group if and only if $n$ is an odd number.  Applying the results of Xie, Feng and Xia \cite {XIE3} and Ma \cite {MA}, we can see that if $Q_{4n}$ is a $4$-$DCI$-group, then $n$ is odd and $9\nmid n$, and $Q _{4p^e}$ is  a $4$-$DCI$-group for any $p>3$ is an odd prime. Recently, Xie and Zhang \cite {XIE4} obtained a new necessary and sufficient condition for $Q_{4n}$ to be a $4$-$CI$-group by investigating isomorphisms  of Bi-Cayley graph of $Q_{4n}$.

  Motivated by the above works,
  we investigate the isomorphic problem of tetravalent Cayley graphs on  $Q_{4n}$.   In this paper, we will give     a necessary and sufficient condition for $Q_{4n}$ to be a $4$-$CI$-group.  Our main results are  as follows.
	
	\begin{thm}
		The generalized quaternion group  $Q_{4n}$ is a $4$-$CI$-group if and only if either $n=2$, or $n$ is odd.
	\end{thm}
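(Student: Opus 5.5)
The proof splits into necessity and sufficiency.

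\emph{Necessity.} The case $n=2$ is covered by the known fact that $Q_8$ is a $DCI$-group. If $n\ge 3$ is even, then Ma \cite{MA} already shows that $Q_{4n}$ is not a $3$-$CI$-group, so it is not a $4$-$CI$-group either. This direction is therefore immediate.

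\emph{Sufficiency.} Let $n$ be odd. By Ma \cite{MA}, $Q_{4n}$ is a $3$-$CI$-group, so it suffices to treat $S=S^{-1}$ with $|S|=4$. Suppose $Cay(G,S)\cong Cay(G,T)$ with $G=Q_{4n}$. The order-4 elements of $G$ are exactly the $a^ib$ together with possibly $a^{n/2}$, which does not exist since $n$ is odd; moreover $a^n$ is the unique involution. I would classify $S$ by its intersection with the coset $\langle a\rangle b$. Inverse-closure forces $S=\{x,x^{-1},y,y^{-1}\}$ or $S=\{x,x^{-1},a^n,\dots\}$, and $|S|=4$ even restricts which shapes occur. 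The shapes up to $Aut(G)$ (whose elements are $a\mapsto a^r$, $b\mapsto a^sb$, $\gcd(r,2n)=1$) are:
\begin{itemize}
\item[(i)] $S\subset\langle a\rangle$;
\item[(ii)] $S=\{a^i,a^{-i},a^jb,a^{j+n}b\}$;
\item[(iii)] $S=\{a^ib,a^{i+n}b,a^jb,a^{j+n}b\}$.
\end{itemize}
The first step is to show that the type of $S$ is a graph invariant: connectivity, the number of components, and the structure of 4-cycles through a vertex (counting solutions of $s_1s_2=s_3s_4$) distinguish the three types. I would then show that $T$ has the same type.

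Case (i) reduces to the cyclic subgroup $C_{2n}$. Since $Cay(G,S)$ is a disjoint union of copies of $Cay(\langle a\rangle,S)$, and cyclic groups are $5$-$CI$ by Li \cite{LI1}, an isomorphism gives $\tau\in Aut(\langle a\rangle)$ with $S^\tau=T$. Every such $\tau$ extends to $Aut(G)$ via $b\mapsto b$.

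Case (iii): apply $b\mapsto a^{-i}b$ to normalize $i=0$. Then $\langle S\rangle=\langle a^{j},b\rangle$, and the graph is determined by $j$ up to the component structure. The isomorphism type (e.g.\ the girth and the order $o(a^j)$) recovers $\gcd(j,2n)$, and a suitable $r$ then maps one set to the other.

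Case (ii): normalize $j=0$, giving $S=\{a^i,a^{-i},b,b^{-1}\}$. The graph is then a "twisted" torus-like graph, essentially a quotient of a Cayley graph of $C_{2n}\rtimes C_4$. I would compute the invariants $o(a^i)$ and the lengths of the cycles generated by the $b$-edges. The crucial point is that oddness of $n$ makes $\langle a^i,b\rangle\cong Q_{4m}$ with $m$ odd, and within it the conjugacy $b^{-1}a^ib=a^{-i}$ pins down the graph up to $\gcd(i,2n)$. I would then exhibit $r$ with $a^i\mapsto a^{i'}$ by an appropriate choice of unit $r$ modulo $2n$, using the CRT and the fact that every divisor class is achieved by units.

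\emph{Main obstacle.} The hardest part is case (ii), together with the mixed cases where $S$ meets both cosets and $\langle S\rangle$ is proper. There, different subgroups $\langle S\rangle$ and $\langle T\rangle$ must be shown to be equal or $Aut(G)$-conjugate, and one must rule out exotic graph isomorphisms. Here I would first try to prove that $Cay(G,S)$ is normal (i.e.\ $Aut(Cay(G,S))_1$ normalizes the regular $G$), apart from a short explicit list of small exceptions. By Babai's criterion, a CI-property then follows once all regular subgroups isomorphic to $G$ in $Aut(Cay(G,S))$ are shown to be conjugate. The exceptional small graphs would be checked directly.
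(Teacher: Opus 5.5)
\textbf{There is a genuine gap, in case (ii).} Your skeleton matches the paper's: split by $|S\cap A|\in\{0,2,4\}$, prove the type is an isomorphism invariant, normalize with $\sigma_{r,s}$, and settle type (i) by the cyclic $5$-CI property. But case (ii), which carries the real content, is not proved. Since $\sigma_{r,s}$ sends $\{a^{\pm i},a^jb,a^{j+n}b\}$ to $\{a^{\pm ir},a^{jr+s}b,a^{jr+s+n}b\}$, you need the following: $Cay(G,\{a^{\pm i},b^{\pm 1}\})\cong Cay(G,\{a^{\pm u},a^vb,a^{v+n}b\})$ forces $o(a^i)=o(a^u)$.

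Your remark that $b^{-1}a^ib=a^{-i}$ ``pins down the graph up to $\gcd(i,2n)$'' gives only the trivial converse. None of the invariants you name recovers $o(a^i)$. The component order $|\langle a^i,b\rangle|=4n/\gcd(i,n)$ forgets the parity of $i$, and the $b$-edges always form $4$-cycles. Concretely, $\{a^{\pm1},b^{\pm1}\}$ and $\{a^{\pm2},b^{\pm1}\}$ give connected graphs on $G$ with the same local $4$-cycle structure. They are not $Aut(G)$-equivalent, so you must show these graphs are non-isomorphic, and your plan gives no way to do so. The cycle length $o(a^i)$ of the $2$-factor $Cay(G,\{a^{\pm i}\})$ is an invariant only once you know that every isomorphism carries $A$-edges to $A$-edges.

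\textbf{The missing step, which the paper supplies by a local argument.} For $n$ odd, every pair of neighbours of $1$ has a second common neighbour except $\{a^i,a^{-i}\}$:
\begin{itemize}
\item $b$ and $b^{-1}$ meet again at $a^n$;
\item $a^{\pm i}$ and $b$ meet again at $a^{\mp i}b$;
\item $a^{\pm i}$ and $b^{-1}$ meet again at $a^{n\mp i}b$;
\item $a^{i}$ and $a^{-i}$ share only $1$, because $4i\not\equiv 0$ and $2i\not\equiv n \pmod{2n}$.
\end{itemize}
Hence every $\phi\in I(X,Y)$ maps $\{a^{\pm i}\}$ onto $\{a^{\pm u}\}$. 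Conjugating $\phi$ by right translations (Lemma 2.3) shows it maps $Cay(G,\{a^{\pm i}\})$ onto $Cay(G,\{a^{\pm u}\})$. So $o(a^i)=o(a^u)$, and $\sigma_{r,v}$ with a unit $r$ satisfying $ir\equiv u \pmod{2n}$ sends $S$ to $T$.

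Your fallback via normality and Babai's criterion is only announced and is much heavier than necessary. It would not suffice by itself either: even for a normal Cayley graph you would still have to show that all regular subgroups of $Aut(Cay(G,S))$ isomorphic to $Q_{4n}$ are conjugate.

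\textbf{A smaller point in case (iii).} The set $\{a^jb,a^{j+n}b\}$ determines $j$ only modulo $n$, and $\gcd(j,2n)\neq\gcd(j+n,2n)$ since $n$ is odd. So the right invariant is $\gcd(j,n)$, read off from $|\langle S\rangle|=4n/\gcd(j,n)$. With this correction your component-order argument works, and it is simpler than the paper's $S^2$ argument.
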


All unexplained notations and terminologies are standard  and can be found in \cite {BIG,HB}.  The paper is organized as follows. In Section 2, we introduce some preliminary results.  In Section 3, we prove Theorem 1.1.
	
	\section{Preliminaries}
	
	In this section, we collect some results which will be used in
the proof of the main results.

\begin{lem} \cite {LI5}
Let $G$ be a finite group, and let $S, T \subset G \setminus \{1\}$. Then
$Cay(G, S)\cong Cay(G, T)$ if and only if $Cay(\langle S\rangle, S) \cong Cay(\langle T\rangle, T)$.
\end{lem}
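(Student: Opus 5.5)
The claim is that $Cay(G,S)\cong Cay(G,T)$ if and only if $Cay(\langle S\rangle,S)\cong Cay(\langle T\rangle,T)$. I will prove it by comparing connected components. The key observation is that the connected components of $Cay(G,S)$ are exactly the right cosets $\langle S\rangle g$, $g\in G$. An arc goes from $g$ to $sg$ with $s\in S$, so every walk from $g$ stays inside $\langle S\rangle g$. Conversely, any $x\in\langle S\rangle$ is a product of elements of $S\cup S^{-1}$, so $xg$ is reachable from $g$ in the underlying undirected graph. For digraphs, finiteness gives $S^{-1}\subseteq\langle S\rangle$ as a monoid, so reachability even holds along directed paths. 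Moreover, right multiplication $\rho_g\colon x\mapsto xg$ maps $\langle S\rangle$ onto $\langle S\rangle g$ and sends arcs $(x,sx)$ to $(xg,sxg)$. Hence every component is isomorphic to $Cay(\langle S\rangle,S)$, and $Cay(G,S)$ is the disjoint union of $|G:\langle S\rangle|$ copies of $Cay(\langle S\rangle,S)$. The same holds for $T$.

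For the ``if'' direction, suppose $\phi\colon Cay(\langle S\rangle,S)\to Cay(\langle T\rangle,T)$ is an isomorphism. Then $|\langle S\rangle|=|\langle T\rangle|$, so the two graphs have the same number $|G|/|\langle S\rangle|$ of components. Choose right transversals $\{g_i\}$ for $\langle S\rangle$ and $\{h_i\}$ for $\langle T\rangle$, indexed by the same set. Define $\Phi(xg_i)=\phi(x)h_i$. By the description of the components, $\Phi$ maps the component $\langle S\rangle g_i$ isomorphically onto $\langle T\rangle h_i$. Since it is a bijection between the component sets, $\Phi$ is an isomorphism of the whole graphs.

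For the ``only if'' direction, an isomorphism $\Psi\colon Cay(G,S)\to Cay(G,T)$ sends connected components to connected components. It therefore maps the component $\langle S\rangle$ onto some component $\langle T\rangle h$. Composing with $\rho_{h^{-1}}$, which is an automorphism of $Cay(G,T)$ carrying $\langle T\rangle h$ to $\langle T\rangle$, gives $Cay(\langle S\rangle,S)\cong Cay(\langle T\rangle,T)$.

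The only step needing care is the identification of the components as cosets in the directed case. Since ``component'' here means weak connectivity, the undirected argument suffices, and the rest is routine.
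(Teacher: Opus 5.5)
Your proof is correct and follows essentially the standard route: the paper gives no proof of its own and only cites \cite{LI5}, where the lemma rests on the same fact you prove, namely that $Cay(G,S)$ is the disjoint union of $|G:\langle S\rangle|$ copies of $Cay(\langle S\rangle,S)$ on the right cosets of $\langle S\rangle$, with right multiplications acting as automorphisms. Your use of weak components for digraphs, and your remark that in a finite group the submonoid generated by $S$ is already $\langle S\rangle$, handle the only point that needs care.
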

	
	  Let $X$  and $Y$  be two isomorphic Cayley
digraphs on $G$. We use $I(X,Y)$  to denote the set of all isomorphisms $\phi$  from
$X$ to $Y$  with $1^{\phi} = 1$.

	\begin{lem}\cite   {MA}
		Let $G$ be a finite group and $S, T$ subsets of $G \setminus \{1\}$ with $X = Cay(G, S) \cong Cay(G, T) = Y$.
		
		\noindent(1) If there exist $s \in S$, $t \in T$ such that $s^\phi = t$ holds for all $\phi \in I(X,Y)$, then
	
		\begin{center}
			$I(X,Y) \subseteq I(Cay(G, S \setminus \{s\}), Cay(G, T \setminus \{t\}))$.
		\end{center}
		
		\noindent(2) If there exists $x \in G$ such that $x^\phi = x$ holds for all $\phi \in I(X,Y)$, then
		\begin{center}
			$I(X,Y) \subseteq I(Cay(G, xS^2 \setminus \{1\}), Cay(G, xT^2 \setminus \{1\}))$,
		\end{center}
		where $xS^2 = \{xs_i s_j \mid s_i, s_j \in S\}$ and $xT^2 = \{xt_i t_j \mid t_i, t_j \in T\}$.		
	\end{lem}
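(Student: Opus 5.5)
The statement to be proved is Lemma 2.2: if every $\phi\in I(X,Y)$ sends a fixed $s\in S$ to a fixed $t\in T$ (part (1)), or fixes a given $x$ (part (2)), then every such $\phi$ is also an isomorphism between the smaller, respectively derived, Cayley digraphs. The plan is to verify the adjacency condition directly from the definitions; we only use that $\phi\in I(X,Y)$ is a bijection $G\to G$ with $1^\phi=1$ preserving arcs, together with the left-translation structure of Cayley digraphs.

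\emph{Step 1: transporting the hypothesis to every vertex.} For $g\in G$ let $R_g\colon y\mapsto yg$ be right multiplication, which is an automorphism of both $X$ and $Y$, since $(y,sy)\mapsto(yg,syg)$. For $\phi\in I(X,Y)$ and $g\in G$ put $\phi_g=R_g\phi R_{g^\phi}^{-1}$. Then $\phi_g$ is an isomorphism $X\to Y$ and $1^{\phi_g}=(g)^\phi(g^\phi)^{-1}=1$, so $\phi_g\in I(X,Y)$. Consequently, under the hypothesis of (1) we get $s^{\phi_g}=t$, that is, $(sg)^\phi=t\,g^\phi$ for all $g\in G$. Under the hypothesis of (2) we get $(xg)^\phi=x\,g^\phi$ for all $g$.

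\emph{Step 2: part (1).} The arcs of $Cay(G,S\setminus\{s\})$ are the arcs $(g,s'g)$ of $X$ with $s'\neq s$. Since $\phi$ maps arcs of $X$ bijectively onto arcs of $Y$, the arc $(g,s'g)$ goes to $(g^\phi,t'g^\phi)$ for some $t'\in T$. By Step 1, $\phi$ maps the arc $(g,sg)$ to $(g^\phi,tg^\phi)$. Because $\phi$ is injective on vertices, $s'g\neq sg$ forces $t'\neq t$. So $\phi$ maps arcs of $Cay(G,S\setminus\{s\})$ into arcs of $Cay(G,T\setminus\{t\})$. The two digraphs have the same number of arcs, $|G|(|S|-1)$, so this map is a bijection on arcs and $\phi$ is an isomorphism. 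It fixes $1$, hence lies in the required $I$.

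\emph{Step 3: part (2), the main obstacle.} One has $h=xs_is_jg$ for some $s_i,s_j\in S$ if and only if there is a directed 2-walk $g\to s_jg\to s_is_jg$ in $X$ followed by left multiplication by $x$. A 2-walk in $X$ maps under $\phi$ to a 2-walk in $Y$, so $(s_is_jg)^\phi=t_kt_lg^\phi$ with $t_k,t_l\in T$. By Step 1, $(xs_is_jg)^\phi=xt_kt_lg^\phi$. Hence $(g,h)$ being an arc of $Cay(G,xS^2\setminus\{1\})$ gives that $(g^\phi,h^\phi)$ is an arc of $Cay(G,xT^2\setminus\{1\})$. The exclusion of $1$ is harmless, because $h\neq g$ implies $h^\phi\neq g^\phi$. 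The converse follows by the same argument applied to $\phi^{-1}$. This works because the hypothesis is symmetric: $x^{\phi^{-1}}=x$, and $\phi\mapsto\phi^{-1}$ is a bijection $I(X,Y)\to I(Y,X)$, so every element of $I(Y,X)$ fixes $x$. The delicate point is exactly this use of Step 1 for left multiplication by $x$, which is not an automorphism in general; the conjugation trick with $R_g$ is what makes it legitimate. Since arcs are preserved in both directions and $1^\phi=1$, $\phi$ lies in the stated set, which completes the proof of the lemma.
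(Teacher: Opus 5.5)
Your proof is correct and takes the paper's approach. The paper quotes this lemma from Ma without proof, but its proof of the generalization Lemma 2.3 rests on exactly your Step 1: the observation $R(g)\phi R((g^\phi)^{-1})\in I(X,Y)$, which turns the hypothesis at the vertex $1$ into $(sg)^\phi=tg^\phi$ (resp.\ $(xg)^\phi=xg^\phi$) at every vertex $g$. Your completion of part (2) is correct: you track images of directed 2-walks and handle the reverse inclusion by running the same argument for $\phi^{-1}\in I(Y,X)$.
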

	
	Motivated by Lemma 2.1(1), we obtain the following version.
	\begin{lem}
		Under the assumptions of Lemma 2.1, if there exist $S' \subset S$, $T' \subset T$ such that $(S')^\phi = T'$ holds for all $\phi \in I(X,Y)$, then
		\begin{center}
			$I(X,Y) \subseteq I(Cay(G, S \setminus S'), Cay(G, T \setminus T'))$.
		\end{center}
\end{lem}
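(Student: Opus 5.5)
This is a direct verification from the definitions, in the same spirit as Lemma 2.2(1), which is the special case $S'=\{s\}$, $T'=\{t\}$. Fix $\phi\in I(X,Y)$. Then $\phi$ is a bijection $G\to G$ with $1^\phi=1$ that maps arcs of $X$ onto arcs of $Y$. We must show that $\phi$ is also an isomorphism from $Cay(G,S\setminus S')$ to $Cay(G,T\setminus T')$. Since $1^\phi=1$ still holds, this gives $\phi\in I(Cay(G,S\setminus S'),Cay(G,T\setminus T'))$.

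The key point is to read the hypothesis $(S')^\phi=T'$ for \emph{all} $\phi\in I(X,Y)$ as a statement at every vertex, not just at $1$. For $g\in G$, let $R_g$ be right multiplication $x\mapsto xg$, which is an automorphism of any Cayley digraph on $G$. Given $\phi\in I(X,Y)$ and $g\in G$, set $h=g^\phi$ and
\[
\psi = R_g\,\phi\,R_{h}^{-1}.
\]
Then $\psi$ is an isomorphism $X\to Y$ with $1^\psi=(g)^\phi h^{-1}=1$, so $\psi\in I(X,Y)$. The hypothesis therefore gives $(S')^\psi=T'$, that is,
\[
\bigl((S'g)^\phi\bigr)h^{-1}=T', \qquad\text{so}\qquad (S'g)^\phi=T'\,g^\phi .
\]
Thus at each vertex $g$, the out-neighbours of $g$ along $S'$ are mapped exactly onto the out-neighbours of $g^\phi$ along $T'$.

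Since $\phi$ is an isomorphism, it maps the full out-neighbourhood $Sg$ of $g$ bijectively onto $Tg^\phi$. Removing the part $S'g$, which goes onto $T'g^\phi$, we get
\[
\bigl((S\setminus S')g\bigr)^\phi=(T\setminus T')g^\phi .
\]
This holds for every $g\in G$. Hence $(g,xg)$ is an arc of $Cay(G,S\setminus S')$ if and only if $(g^\phi,(xg)^\phi)$ is an arc of $Cay(G,T\setminus T')$, which is exactly the required isomorphism statement.

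The only step needing care is the conjugation by right translations. It is what lets the hypothesis stated at the identity be applied at every vertex, and it requires checking that the translated map $\psi$ fixes $1$ and is still an isomorphism $X\to Y$. If $S$ and $T$ are inverse-closed (the undirected case), the same argument applies to edges, since arcs come in pairs.
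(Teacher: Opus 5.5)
Your proof is correct and follows the same route as the paper. You conjugate $\phi$ by right translations to get $R_g\phi R_{g^\phi}^{-1}\in I(X,Y)$, deduce $(S'g)^\phi=T'g^\phi$ at every vertex $g$, and then conclude for the remaining arcs; your last step, removing $S'g$ from the bijection $Sg\to Tg^\phi$, justifies that conclusion more explicitly than the paper's elementwise statement does.
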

\textbf{Proof.} Let $\phi\in I(X,Y)$. Observe that $R(g)\phi R((g^\phi)^{-1}) \in I(X,Y)$ for any $g \in G$. By assumption, for any $s'\in S'$, there exists an element $t'$ in $T'$, such that
		\begin{center}
			$s'^{R(g)\phi R((g^\phi)^{-1})} = t'$.
		\end{center}
	This yields to  $(s'g)^\phi = t'g^\phi$. Thus, for any $g,h \in G$,
\begin{center}
		$h  = s'g$ if and only if $h^\phi = (s'g)^\phi = t'g^\phi$.
\end{center}
		That is, $(g, h) \in E(Cay(G, S \setminus S')\bigr)$ if and only if
		$(g^\phi,h^\phi) \in E\bigl(Cay(G, T \setminus T')\bigr)$.
This shows that $\phi\in I(Cay(G, S \setminus S'), Cay(G, T \setminus T'))$. $\Box$

\begin{lem} \cite {LI1}
		All cyclic groups are $5$-CI groups.	
	\end{lem}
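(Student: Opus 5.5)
The plan is to reduce to connected circulants and then use Babai's criterion. Let $G=\langle a\rangle\cong C_n$, and let $S=S^{-1}\subseteq G\setminus\{1\}$ with $|S|\le 5$. Suppose $Cay(G,S)\cong Cay(G,T)$. The connected components of $Cay(G,S)$ and $Cay(G,T)$ have sizes $|\langle S\rangle|$ and $|\langle T\rangle|$, so these two orders agree. A cyclic group has a unique subgroup of each order, hence $\langle S\rangle=\langle T\rangle=H$. Every automorphism of $H$ is multiplication by a unit modulo $|H|$, and it lifts to an automorphism of $G$, since units modulo a divisor of $n$ lift to units modulo $n$. By Lemma 2.1 it therefore suffices to treat connected circulants, that is, $\langle S\rangle=G$.

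Next I invoke Babai's criterion. $S$ is a $CI$-subset if and only if any two regular subgroups of $A=Aut(Cay(G,S))$ that are isomorphic to $C_n$ are conjugate in $A$. The work then splits by the structure of $A$.

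\emph{Normal case.} If $A=G\rtimes Aut(G,S)$, then $G$ is the unique regular cyclic subgroup of $A$. This is checked by a Sylow/Hall argument, using that $Aut(G,S)$ acts faithfully on $S$ and so has order dividing $5!$. Hence $S$ is $CI$.

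\emph{Non-normal case.} Here I appeal to the known description of connected non-normal circulants of valency at most $5$. Such a graph is a complete graph or complete bipartite graph, such as $K_5$, $K_6$, $K_{4,4}$, $K_{5,5}$, or $K_6$ minus a perfect matching. Otherwise it is a lexicographic-type product, for example $C_m[2K_1]$ of valency $4$, or a graph obtained from such a product by adding the matching given by the involution $a^{n/2}$ (valency $5$). In each family, $S$ is explicitly determined up to multiplication by a unit: it is $H\setminus\{1\}$, a coset union $\{a^{\pm k}\}\langle a^{n/2}\rangle$, or $\{a^{\pm k}\}\langle a^{n/2}\rangle\cup\{a^{n/2}\}$. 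One then checks directly that any $T$ giving an isomorphic graph has the same form, and so is a unit multiple of $S$.

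As a more elementary alternative, I would split by valency. For $|S|\le 2$, we have $S=\{a^{\pm k}\}$ with $k$ a unit, so the graph is a cycle and the claim is immediate. For $|S|=3,4,5$, write $S=\{a^{\pm i},a^{\pm j}\}$, possibly together with $a^{n/2}$. I would then use invariants preserved by graph isomorphisms fixing $1$: the number of triangles and of $4$-cycles through the identity, and the common-neighbour counts. These determine which arithmetic relations hold among $\pm i,\pm j,n/2$, such as $i\equiv\pm 2j$, $i\equiv\pm 3j$, $2i\equiv\pm 2j$, $i+j\equiv n/2$, or $4i\equiv 0$. When no such relation holds, the $4$-cycles through $1$ are exactly the commutation squares $a^{i}a^{j}=a^{j}a^{i}$. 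An isomorphism fixing $1$ must then map the pairs $\{a^{\pm i}\}$, $\{a^{\pm j}\}$ to the corresponding pairs of $T$. Lemma 2.2 and Lemma 2.3 let me peel off such invariant subsets, reducing to the valency-$2$ case on each generator. The units involved can then be combined through the Chinese remainder theorem to give a single $\sigma\in Aut(G)$ with $S^\sigma=T$.

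The main obstacle is the valency-$4$ (and hence $5$) case in which exceptional relations hold, for example $n=4k$ with $j=i+n/2$, or $\gcd(i,n)\neq 1$ with $\langle a^i\rangle\cap\langle a^j\rangle$ nontrivial. In that case the $4$-cycle structure does not separate the two pairs, and one must produce the unit explicitly. I would try first to show that $S$ is then a union of cosets of $\langle a^{n/2}\rangle$ or $\langle a^{n/4}\rangle$, pass to the quotient graph, apply induction on $n$, and then lift the resulting unit back to $G$.
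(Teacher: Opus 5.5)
There is a genuine gap. The paper gives no proof of this lemma; it is quoted from Li \cite{LI1}. So your outline has to stand on its own, and as written it does not.

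\textbf{The non-normal case.} This is the load-bearing step. You appeal to an unreferenced ``known description'' of non-normal connected circulants of valency at most $5$. From it you conclude that, up to a unit, $S$ is always $H\setminus\{1\}$, $\{a^{\pm k}\}\langle a^{n/2}\rangle$, or $\{a^{\pm k}\}\langle a^{n/2}\rangle\cup\{a^{n/2}\}$. That conclusion is false. $Cay(\mathbb{Z}_{10},\{\pm1,\pm3\})\cong K_{5,5}-5K_2$ and $Cay(\mathbb{Z}_{10},\{\pm2,\pm4,5\})\cong K_5\square K_2$ both have automorphism group $S_5\times\mathbb{Z}_2$, of order $240$. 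Since $|G\rtimes Aut(G,S)|\le 40$, both are non-normal, yet neither connection set has any of your three shapes. The same holds for $K_{5,5}=Cay(\mathbb{Z}_{10},\{1,3,5,7,9\})$, which you list yourself, and for $K_{3,3}=Cay(\mathbb{Z}_6,\{1,3,5\})$. These small graphs can be settled by hand, but the non-normal case needs a correct, complete and cited classification.

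\textbf{The normal case.} This is only gestured at. Knowing that $A=G\rtimes Aut(G,S)$ with $|Aut(G,S)|$ dividing $5!$ does not force $G$ to be the only regular cyclic subgroup: $\mathbb{Z}_8\rtimes\langle 5\rangle\cong M_{16}$ contains two, although $|\langle 5\rangle|=2$. Two further facts are needed.
\begin{itemize}
\item For $|S|\le 5$, $Aut(G,S)$ is a $2$-group. An orbit of size $3$ or $5$ of a unit on $S$ would have to be inverse-closed, since otherwise $|S|\ge 6$. It would then contain the unique involution, which every unit fixes.
\item If $8\mid n$ and $1+n/2\in Aut(G,S)$, then $S=\{a^{\pm k}\}\langle a^{n/2}\rangle$, possibly together with $a^{n/2}$, so the graph is non-normal.
\end{itemize}
With these, a Hall/Sylow argument goes through: the relevant $2$-group is cyclic, dihedral or semidihedral, each with a unique cyclic subgroup of order $|G|_2$.

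\textbf{The elementary alternative.} This route fails at its last step. Peeling off the pairs gives separate automorphisms $\sigma_1,\sigma_2$ with $\{a^{\pm i}\}^{\sigma_1}=\{a^{\pm i'}\}$ and $\{a^{\pm j}\}^{\sigma_2}=\{a^{\pm j'}\}$. The Chinese remainder theorem cannot merge them. In $\mathbb{Z}_{13}$, the maps $\{\pm1\}\to\{\pm1\}$ and $\{\pm3\}\to\{\pm5\}$ are each realized by a unit, but no unit maps $\{\pm1,\pm3\}$ onto $\{\pm1,\pm5\}$.

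The compatibility has to come from the isomorphism $\phi$ itself. Once the only $4$-cycles are commutation squares, $\phi$ preserves the two edge classes globally with consistent orientations. Hence $\phi(a^{xi+yj})=a^{\epsilon xi'+\delta yj'}$, and the maps $(x,y)\mapsto xi+yj$ and $(x,y)\mapsto \epsilon xi'+\delta yj'$ have the same kernel in $\mathbb{Z}^2$. This yields an automorphism sending $a^i\mapsto a^{\epsilon i'}$ and $a^j\mapsto a^{\delta j'}$.

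Finally, you leave the cases with exceptional relations as a plan (``I would try\ldots''), and you flag them yourself as the main obstacle. Those cases are where the real content of Li's theorem lies, so this route is also incomplete.
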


Recall that  a  generalized quaternion group $Q_{4n}$ of order $4n$  is defined by  the presentation:	
	\begin{center}
	 $Q_{4n} = \langle a, b \mid a^{2n} = 1,\ b^2 = a^n,\ b^{-1}ab = a^{-1} \rangle$, where $n\geq 2$.
	\end{center}		
The subgroup $A = \langle a\rangle$ is a normal subgroup of index $2$ in $G$, and the elements of
$G$ can be partitioned into two disjoint subsets of equal size as follows:
\begin{center}
$A = \{a^i | 0 \leq i \leq 2n - 1\}$ and $B = \{a^ib |0 \leq i \leq 2n - 1\}$.
\end{center}
It is seen that $o(a^ib)=4$ and $a^n$ is the unique involution of $Q_{4n}$.
The automorphism group of $Q_{4n}$ is given by
\begin{center}
$Aut(Q_{4n}) = \{\sigma_{i, j} : a^{\sigma_{i,j}}\rightarrow a^i, b^{\sigma_{i,j}}\rightarrow a^jb | 0 \leq i , j \leq 2n - 1, gcd(i , 2n) = 1\}$.
\end{center}

\begin{lem} \cite   {MA}
		For any $\pi \in Aut(\langle a \rangle)$ and for any integer $k$, define the mapping
		$\pi' : Q_{4n} \to Q_{4n}$ by
		\begin{center}
			$(a^i b^j)^{\pi'} = (a^i)^{\pi}(a^k b^j), \quad (i = 0, 1, \dots, 2n - 1, \, j = 0, 1)$.
		\end{center}
		Then $\pi' \in Aut(Q_{4n})$.
	\end{lem}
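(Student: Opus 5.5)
The plan is to verify directly that $\pi'$ respects the defining relations of $Q_{4n}$ and is a bijection. First I would fix the meaning of the map. Write $\pi$ as $a^i\mapsto a^{ri}$, where $\gcd(r,2n)=1$. Then on the two cosets the map is
\[
(a^i)^{\pi'}=a^{ri}\quad\text{and}\quad (a^ib)^{\pi'}=a^{ri}a^kb=a^{ri+k}b .
\]
On $A$ the rule must be read as $a^i\mapsto a^{ri}$, not as $a^{ri+k}$. Otherwise $1^{\pi'}=a^k\neq 1$ and the map would not even fix the identity. In other words, the factor $a^kb^j$ is read so that $k$ contributes only when $j=1$. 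Under this reading $\pi'$ is exactly the automorphism $\sigma_{r,k}$ from the list of $Aut(Q_{4n})$ above, so the statement follows once we show that $a\mapsto a^r$, $b\mapsto a^kb$ extends to an automorphism.

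To show this, I would check the three defining relations on the images $x=a^r$ and $y=a^kb$.
\begin{itemize}
\item Since $\gcd(r,2n)=1$, the element $x$ has order $2n$, so $x^{2n}=1$.
\item Next, $y^2=a^kba^kb=a^k a^{-k}b^2=a^n$, and $x^n=a^{rn}=a^n$ because $r$ is odd. Hence $y^2=x^n$.
\item Finally, $y^{-1}xy=b^{-1}a^{-k}a^ra^kb=b^{-1}a^rb=a^{-r}=x^{-1}$.
\end{itemize}
By von Dyck's theorem the assignment therefore extends to a homomorphism $\theta:Q_{4n}\to Q_{4n}$.

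It remains to see that $\theta$ is onto and agrees with $\pi'$. The image contains $\langle a^r\rangle=\langle a\rangle$ and also $a^kb$, hence it contains $b$, so $\theta$ is surjective and, the group being finite, bijective. For agreement, $\theta(a^i)=a^{ri}$ and $\theta(a^ib)=a^{ri}a^kb$, which match the formula for $\pi'$ on $A$ and on $B$ respectively.

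No step here is a real obstacle. The only delicate point is interpreting the formula correctly on $A$, that is, recognising that the $a^k$ twist applies only on the coset $B$. The relation checks themselves are routine.
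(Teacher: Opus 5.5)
Your proof is correct. The three relation checks, the appeal to von Dyck's theorem, and surjectivity via $b=a^{-k}\cdot a^kb$ are all sound; in particular you correctly use that $\gcd(r,2n)=1$ forces $r$ odd, so $a^{rn}=a^n$. The paper itself gives no proof of this lemma: it is quoted from Ma. It is also immediate from the description $Aut(Q_{4n})=\{\sigma_{i,j}\}$ stated just before it, since $\pi'=\sigma_{r,k}$. Your relation check therefore supplies the one thing that description leaves unjustified here, namely that each $\sigma_{r,k}$ really is an automorphism, which is all the lemma needs.

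Your reinterpretation of the formula is necessary, not pedantic. As printed, the formula gives $(a^i)^{\pi'}=(a^i)^{\pi}a^k$ on $A$, so $1^{\pi'}=a^k$, and the statement is false whenever $k\not\equiv 0 \pmod{2n}$. The intended formula is $(a^ib^j)^{\pi'}=(a^i)^{\pi}(a^kb)^j$. This is exactly how the lemma is used in Cases II and III of the proof of Theorem 1.1, where $\sigma$ restricts to $\alpha$ on $A$ and sends $b\mapsto a^vb$.
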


\begin{lem} \cite  {MA}
		The finite generalized quaternion group $Q_{4n}$ ($n \neq 2$) is an $m$-(D)CI-group for $m = 1, 2, 3$ if and only if $n$ is odd.
	\end{lem}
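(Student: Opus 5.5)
For the necessity direction I would take $n$ even and produce explicit non-$CI$ connection sets. I will use that $A=\langle a\rangle$ is characteristic in $Q_{4n}$ for $n\neq 2$: it is the unique cyclic subgroup of index $2$, since every element of $B$ has order $4<2n$. So every $\sigma_{i,j}$ maps $A$ to $A$ and $B$ to $B$. When $n$ is even, $a^{n/2}\in A$ has order $4$. Put $S=\{a^{n/2},a^{-n/2}\}$ and $T=\{b,b^{-1}\}$. Both $Cay(Q_{4n},S)$ and $Cay(Q_{4n},T)$ are disjoint unions of $n$ undirected $4$-cycles, because $\langle S\rangle\cong\langle T\rangle\cong C_4$ (this is also Lemma 2.1 applied to the two copies of $C_4$). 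Yet no automorphism carries $S\subset A$ to $T\subset B$.

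This kills the $2$-$CI$ property, and hence the $m$-$CI$ property for $m\ge 2$. It also kills the $m$-$DCI$ property for $m\ge 2$. For $m=1$ in the directed sense I would use $S=\{a^{n/2}\}$ and $T=\{b\}$, which both give disjoint directed $4$-cycles. For $m=1$ in the undirected sense, $S=S^{-1}$ with $|S|=1$ forces $S=\{a^n\}$, the unique involution, so that case is vacuous. I would read the lemma's ``iff'' for $m=1$ in the $DCI$ sense.

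For sufficiency, let $n$ be odd and let $X=Cay(Q_{4n},S)\cong Cay(Q_{4n},T)$ with $|S|=|T|\le 3$. The key observation is that now $A\cong C_{2n}$ contains no element of order $4$, so the elements of order $4$ are exactly those of $B$. The first step is to show that the isomorphism type of $X$ determines $|S\cap B|$ and the order profile of $S\cap A$. I would do this by counting, at a fixed vertex, closed walks or short cycles of the form $s,s,\dots$, together with the structure of $\langle S\rangle$; by Lemma 2.1 we may work in $\langle S\rangle$ and $\langle T\rangle$. For instance, an element $s$ with $s^{-1}\in S$ and $s^4=1\neq s^2$ forces $s\in B$. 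With $|S|\le 3$, only a handful of ``types'' of $S$ occur, and I would verify case by case that the types of $S$ and $T$ coincide.

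Then I would treat the cases separately.

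If $S,T\subseteq A$, then Lemma 2.1 reduces the problem to the cyclic groups $\langle S\rangle$ and $\langle T\rangle$. These must be isomorphic, hence equal, as subgroups of the cyclic group $A$. Lemma 2.4 then gives some $\pi\in Aut(\langle S\rangle)$ with $S^\pi=T$. This $\pi$ extends to $Aut(A)$, since $A$ is cyclic, and then to $Aut(Q_{4n})$ by Lemma 2.5.

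If $S$ meets $B$, I would use the transitivity of $\{\sigma_{1,j}\}$ on $B$ to normalize one element to $b$ on both sides. In the undirected case the set is then $\{b,b^{-1}\}$, possibly together with $a^n$. In the directed case with $|S|\le 3$ the possibilities are $\{a^ib,a^kb\}$, $\{a^ib,a^j\}$, and so on. For each of these I would read off invariants such as the order of $(a^ib)(a^kb)^{-1}=a^{i-k}$ from the lengths of alternating cycles in $X$. From these I would choose $\sigma_{i,j}$ explicitly, using Lemma 2.2 and Lemma 2.3 to peel off any subset that every $\phi\in I(X,Y)$ must fix.

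I expect the main obstacle to be the invariant-counting step. There one must show rigorously, for each small configuration, that a graph isomorphism cannot send an element of $A$ to an element of $B$, and that it preserves the relevant orders. Lemma 2.2(2) and Lemma 2.3 are the natural tools for this: they show that the set $S\cap B$, being canonically determined, is mapped to $T\cap B$ by every isomorphism fixing $1$. That lets one delete it and recurse to a smaller connection set contained in $A$.
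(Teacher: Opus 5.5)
The paper gives no proof of this lemma; it is quoted from \cite{MA}, so your argument has to stand on its own. Your necessity direction does. For $n\neq 2$ the subgroup $A$ is characteristic. For even $n$ the sets $\{a^{n/2},a^{-n/2}\}$ and $\{b,b^{-1}\}$ (and $\{a^{n/2}\}$, $\{b\}$ in the directed case $m=1$) give isomorphic Cayley (di)graphs that no automorphism relates. Your remark that the undirected case $m=1$ is vacuous is also correct, so the ``only if'' can hold there only in the DCI sense.

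The sufficiency direction, however, is a plan rather than a proof. The mechanism you describe fails exactly in the directed cases with $S\cap B\neq\emptyset$, which carry the content of the lemma.

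First, Lemma 2.3 takes as its \emph{hypothesis} that every $\phi\in I(X,Y)$ maps $S\cap B$ onto $T\cap B$. Calling $S\cap B$ ``canonically determined'' assumes exactly what the invariant-counting step was supposed to establish.

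Second, even granting this, deleting $S\cap B$ and recursing gives one automorphism matching $S\cap A$ with $T\cap A$ and another matching $S\cap B$ with $T\cap B$. But $\sigma_{r,u}$ must act with the \emph{same} multiplier $r$ on both parts. For $n=5$, take $S=\{a,b,a^2b\}$ and $T=\{a,b,a^4b\}$. The $A$-parts coincide, $\sigma_{7,0}$ maps $S\cap B$ onto $T\cap B$, and the orders you propose to read off agree ($o(a)=10$, $o(a^{-2})=o(a^{-4})=5$). Yet no automorphism maps $S$ to $T$: it would force $r=1$, and then $\{u,u+2\}\neq\{0,4\}$ modulo $10$. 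The two digraphs are in fact non-isomorphic. Since $b\cdot a^4b=a\in T$, $Cay(G,T)$ has $2$-paths $g\to a^4b\,g\to ag$ closed by an arc $g\to ag$, whereas $S\cap S^2=\emptyset$, so $Cay(G,S)$ has no such configuration. A proof must extract from the graph invariants of this kind, coupling the $A$-part with the $B$-part, and none is supplied.

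Third, suppose $S\subseteq B$ (directed, $|S|=3$, say $S=\{b,a^ib,a^jb\}$). Then nothing remains after deletion. Lemma 2.2(2) only gives a connection set $\{a^{n},a^{n\pm i},a^{n\pm j},a^{n\pm(i-j)}\}\setminus\{1\}$ of size up to $7$, outside the range of Lemma 2.4. That set records only the difference set of $\{0,i,j\}$ up to units. Recovering $\{0,i,j\}$ up to the affine maps $x\mapsto rx+u$ induced by $Aut(Q_{4n})$ is a separate argument you have not given.

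Finally, even in the directed case $S,T\subseteq A$ you cannot use Lemma 2.4, which concerns undirected graphs. Cyclic groups need not be $3$-DCI-groups: for $C_8=\langle c\rangle$ one has $Cay(C_8,\{c,c^2,c^5\})\cong Cay(C_8,\{c,c^5,c^6\})$, although the $Aut(C_8)$-orbit of $\{c,c^2,c^5\}$ is $\{\{c,c^2,c^5\},\{c^3,c^6,c^7\}\}$. What you need there is the separate fact that $C_{2n}$ with $n$ odd is a $3$-DCI-group (cf.\ \cite{SUN,FANG3,KOV4}).

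The undirected half of sufficiency can be completed along your lines. The component structure of $Cay(G,S)$ ($4$-cycles or $K_4$'s versus the other cycle lengths, prisms and M\"obius ladders) separates $S\subseteq A$ from $S=\{b,b^{-1}\}$ or $\{a^n,b,b^{-1}\}$. The DCI half, however, remains unproved.
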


	\section{Proof of Theorem 1.1}
In this section,  we will prove Theorem 1.1.\\

\textbf{Proof of Theorem 1.1.} Let $G\cong Q_{4n}$ be the general quaternion group of order $4n$ with $n\geq2$. If $n=3$,
 then 	$G$ is   the
quaternion group of order $8$. It is well known that  $Q_8$ is a $CI$-group. So we can assume that $n\geq3$ is an odd number.
 Let $S$  be a subset of $G \setminus \{1\}$  such that $S^{-1}=S$ and $|S| = 4$.  We  need show that if $S$ is a $CI$-ubset of $G$, i.e. if
  $Cay (G,S)\cong Cay (G,T)$, then there exists an automorphism $\sigma$ of $G$, such that $S^{\sigma}=T$.

  Observe that  $a^n$ is the unique involution of $G$. Thus, $|S|=4$ implies that   $a^n\not\in S$. Furthermore, $S^{-1}=S$ implies that $|S\cap A|=4$, $2$ or $0$. Our proof will be divided into three cases as follows.\\
 		
 Case I: $ |S \cap A| = 4$.
	
In this case, we have $S\subseteq A$.  Since $A\leq G$,  we get that   $\langle S\rangle \leq A$. Note that $|A|=2n$ and $n$ is odd, we get $4 \nmid |\langle S \rangle|$. Furthermore, $Cay(G,S)\cong Cay(G,T)$  implies that
   $Cay (\langle S \rangle, S) \cong  Cay (\langle T \rangle, T)$ by Lemma 2.1. This leads to  $|\langle S \rangle| = |\langle T \rangle|$.  Since $4 \nmid |\langle T \rangle|$, $T\cap B=\emptyset$ and hence   $T \subset A$. Consequently, we have $\langle S\rangle = \langle T\rangle\leq A$. Applying Lemma 2.1 again, $Cay(A,S)\cong Cay(A,T)$. By Lemma 2.4, we know that $A$ is a $5$-$CI$-group.  So there exists $\alpha \in Aut (A)$ such that $S^\alpha = T$.  Applying Lemma 2.5,    $\alpha$ can be extended to an
automorphism $\sigma$ of $G$  by $a^{\sigma}=a^{\alpha}$ and $b^{\sigma}=b$. It is obvious that $S^\sigma=S^\alpha=T$. This shows that $S$ is a $CI$-subset of $G$.\\

Case II:  $|S \cap \langle a \rangle| = 2$.

 In this case, we can assume that $S=\{a^i, a^{-i}, a^jb, a^{n+j}b\}$, where $1\leq i\leq n-1$, $0\leq j\leq n-1$. By arguments of the Case I, we know that $T\cap B\neq\emptyset$.  Let $\Gamma=Cay(G,S)$. For any $u\in V(\Gamma)$, denote by $\Gamma(u)$ the neighborhood of $u$.  Since $n$ is an odd number, we have $a^{2i}\neq a^{-2i}$. Furthermore, it is easy seen that
	\begin{align*}
		&\Gamma(1) =S=\{a^i, a^{-i}, a^jb, a^{n+j}b\},\\
		&\Gamma(a^i) = \{1, a^{2i},  a^{j-i}b, a^{n+j-i}b\}, \quad
		\Gamma(a^{-i}) = \{1, a^{-2i}, a^{i+j}b, a^{n+i+j}b\},\\
		&\Gamma(a^jb) = \{1, a^n, a^{j-i}b, a^{i+j}b\}, \quad
		\Gamma(a^{n+j}b) = \{1,a^n, a^{n+j-i}, a^{n+i+j}b\}.
	\end{align*}
The induced subgraph $[\Gamma(1)\cup\Gamma(a^i)\cup\Gamma(a^{-i})\cup\Gamma(a^jb)\cup\Gamma(a^{n+j}b)]$ of $\Gamma(1)\cup\Gamma(a^i)\cup\Gamma(a^{-i})\cup\Gamma(a^jb)\cup\Gamma(a^{n+j}b)$ in  can be depicted as Fig. \ref{s1}.	
From Fig. \ref{s1}, it can be  seen that for any $\phi\in I(Cay(G,S),Cay(G,T))$,   $\{a^i, a^j\}^\phi\subset  T\cap A$.  Since $T\cap B\neq\emptyset$,  we have $|T\cap B|=2$ and $|T\cap A|=2$.  Without loss of generality, we can assume that
	 \begin{center}
	 	$S=\{a^i,a^{-i},b,a^nb\}$ and $T=\{a^u,a^{-u},a^vb,a^{n+v}b\}$.
	 \end{center}
By above arguments, we have
\begin{center}
 $\{a^i, a^{-i}\}^\phi=\{a^u,a^{-u}\}$ and $\{b,a^nb\}^{\phi}=\{a^vb,a^{n+v}b\}$.
 \end{center}
 Applying Lemma 2.3, we get
 \begin{center}
 $Cay (G,\{a^i,a^{-i}\})\cong Cay (G,\{a^u,a^{-u}\})$ and $Cay (G,\{b,a^nb\}\cong Cay (G,\{a^vb,a^{n+v}b\})$.
 \end{center}
 Moreover,  by Lemma 2.1, we have
  \begin{center}
  $Cay (\langle a^i\rangle ,\{a^i,a^{-i}\})\cong Cay (\langle a^u \rangle ,\{a^u,a^{-u}\})$.
    \end{center}
Thus $|\langle a^i\rangle|=|\langle a^u\rangle|$ and hence $\langle a^i\rangle = \langle a^u\rangle\leq A$. Applying Lemma 2.1 again,
we have
\begin{center}
$Cay (A,\{a^i,a^{-i}\})\cong Cay (A,\{a^u,a^{-u}\})$.
  \end{center}
  Since all cyclic groups are $2$-$CI$-group, there exists an automorphism $\alpha$ of $A$ such that $\{a^i, a^j\}^\alpha=\{a^u,a^{-u}\}$.   Applying Lemma 2.5,   $\alpha$ can be extended to an
automorphism $\sigma$ of $G$ by $(a^i)^\sigma=(a^i)^\alpha$ and $b^{\sigma}=a^vb$. It follows that $S^\sigma=T$ and hence $S$ is a $CI$-subset of $G$. \\
\begin{figure}[t]
\centering
\begin{tikzpicture}
\tikzstyle{main node}=[draw,shape=circle,label distance=0mm,inner sep=0pt];
\tikzstyle{every node}=[draw,shape=circle,label distance=-0.5mm,inner sep=1pt];
\node[main node] (1) at (0,0.8) [fill,label=above:\tiny{$1$}] {};
\node[main node] (ai) at (-2,0) [fill,label=left:\tiny{$a^i$}] {};
\node[main node]  (a-i) at (2,0) [fill,label=left:\tiny{$a^{-i}$}] {};
\node[main node] (ajb) at (-1,0) [fill,label=right:\tiny{$a^jb$}] {};
\node[main node] (anjb) at (1,0) [fill,label=right:\tiny{$a^{n+j}b$}] {};
\node[main node] (a2i) at (-3,-0.8) [fill,label=below:\tiny{$a^{2i}$}] {};
\node[main node] (a-2i) at (3,-0.8) [fill,label=below:\tiny{$a^{-2i}$}] {};
\node[main node]  (ai+jb) at (1,-0.8) [fill,label=below:\tiny{$a^{i+j}b$}] {};
\node[main node] (aj-ib) at (-2,-0.8) [fill,label=below:\tiny{$a^{j-i}b$}] {};
\node[main node] (ani+jb) at (2,-0.8) [fill,label=below:\tiny{$a^{n+i+j}b$}] {};
\node[main node] (anj-ib) at (-1,-0.8) [fill,label=below:\tiny{$a^{n+j-i}b$}] {};
\node[main node] (an) at (0,-0.8) [fill,label=below:\tiny{$a^{n}$}] {};
 \draw
 (1)--(ai)(1)--(a-i)(1)--(ajb)(1)--(anjb)
 (ai)--(a2i)(ai)--(aj-ib)(ai)--(anj-ib)
 (a-i)--(a-2i)(a-i)--(ai+jb)(a-i)--(ani+jb)
 (ajb)--(aj-ib)(ajb)--(ai+jb)(ajb)--(an)
 (anjb)--(ani+jb)(anjb)--(anj-ib)(anjb)--(an);
\end{tikzpicture}
\caption{The induced subgraph $[\Gamma(1)\cup\Gamma(a^i)\cup\Gamma(a^{-i})\cup\Gamma(a^jb)\cup\Gamma(a^{n+j}b)]$}\label{s1}
\end{figure}		

Case III: $|S\cap A|=0$.

In this case, without loss of generality, let
\begin{center}
$S=\{b,a^nb,a^ib,a^{n+i}b\}$, where $1\leq i\leq n-1$.
\end{center} Let $T$ be a subset of $G$ such that
$Cay(G,S)\cong Cay(G,T)$. By the arguments of the Cases I-II, we can see that $T\cap A=\emptyset$. So we can assume that
\begin{center}
$T=\{a^ub,a^{n+u}b,a^vb,a^{n+v}b\}$, where $u\neq v$ and $0\leq u,v\leq n-1$.
\end{center}
Moveover, we have
\begin{center}
$S^2=\{1,a^n,a^i,a^{-i},a^{n+i},a^{n-i}\}$ and $T^2=\{1,a^n,a^{u-v},a^{n+u-v},a^{v-u},a^{n+v-u}\}$.
\end{center}
It is obvious that $1^\phi=1$ for any $\phi\in I(Cay(G,S),Cay(G,T))$. Applying Lemma 2.2(2), we have
$Cay(G,S^2\setminus\{1\})\cong Cay (G,T^2\setminus\{1\})$, i.e.,
\begin{center}
		 $Cay(G,\{a^n,a^i,a^{-i},a^{n+i},a^{n-i}\})\cong Cay (G,\{a^n,a^{u-v},a^{n+u-v},a^{v-u},a^{n+v-u}\})$.
\end{center}
Furthermore, by Lemma 2.1, we have
\begin{center}
$Cay (A,\{a^n,a^i,a^{-i},a^{n+i},a^{n-i}\})\cong Cay (A,\{a^n,a^{u-v},a^{n+u-v},a^{v-u},a^{n+v-u}\})$.
\end{center}
We have known  that $A$ is a $5$-$CI$-group by Lemma 2.4. So there exists an automorphism $\alpha$ of $A$ such that
	\begin{center}
$\{a^n,a^i,a^{-i},a^{n+i},a^{n-i}\}^\alpha=\{a^n,a^{u-v},a^{n+u-v},a^{v-u},a^{n+v-u}\}$.
\end{center}
Note that $a^n$ is the unique involution in $A$, we have $(a^n)^\alpha=a^n$ and consequently
\begin{center}
$\{a^i,a^{-i},a^{n+i},a^{n-i}\}^\alpha=\{a^{u-v},a^{n+u-v},a^{v-u},a^{n+v-u}\}$.
\end{center}
Without loss of generality, we can let $(a^i)^\alpha=a^{u-v}$. Then
\begin{center}
$(a^{-i})^\alpha=a^{v-u}$, $(a^{n+i})^\alpha=a^{n+u-v}$, $(a^{n-i})^\alpha=a^{n+v-u}$.
\end{center}
Furthermore, applying Lemma 2.5, $\alpha$ can be extended to an
automorphism $\sigma$ of $G$ by $(a^i)^\sigma=(a^i)^\alpha$ and $b^{\sigma}=a^vb$. It follows that $S^\sigma=T$ and hence $S$ is a $CI$-subset of $G$.

Conversely, suppose that $G$ is a $4$-$CI$-group. In particular, $G$ is also a $3$-$CI$-group, so either $n=2$, or $n\geq3$ is odd by Lemma 2.6.  The proof of theorem 1.1 is complete. $\Box$\\

	\noindent{\bf Declaration of Interest}
	
	The authors declared there don't exist potential competing interests.

	\bibliographystyle{amsplain}

\begin{thebibliography}{}
		
		
		\bibitem{ADA}	A. \'{A}d\'{a}m, 'Research problem 2-10', J. Combin. Theory, 1967, 2:309.
		
		\bibitem{ALS1} B. Alspach, T.D. Parsons, Isomorphism of circulant graphs and digraphs, Discrete Math., 1979, 25:97-108.
		
		\bibitem{ALS2} B. Alspach, Isomorphisms of Cayley graphs on abelian groups, NATO ASI Ser. C, 1997, 497:1-22.
		
		\bibitem{BAB1} L. Babai, Isomorphism problem for a class of point-symmetric structures, Acta Math. Acad. Sci. Hungar., 1977, 29:329-336.
		
		\bibitem{BAB2} L. Babai, P. Frankl, Isomorphisms of Cayley graphs I, Colloq. Math. Soc. J\'anos Bolyai, 1978, 18:35-52.
		
		\bibitem{BAB3} L. Babai, P. Frankl, Isomorphisms of Cayley graphs II, Acta Math. Acad. Sci. Hungar., 1979, 34:177-183.

	\bibitem{BIG} N. Biggs , Algebraic Graph Theory, Cambridge University Press, New York, 1993. 
		
		\bibitem{CON} M. Conder, C.H. Li, On isomorphisms for finite Cayley graphs, European J. Combin., 1998, 19:911-919.
		
		
		\bibitem{DOB} E. Dobson, On the Cayley isomorphism problem, Discrete Math., 2002, 247:107-116.
		
		\bibitem{ELS} B. Elspas, J. Turner, Graphs with circulant adjacency matrices, J. Combin. Theory, 1970, 9:297-307.
		
		\bibitem {FANG1} X.G. Fang, A characterization of finite abelian 2-DCI groups, J. Math., 1988, 8:315-317.
		
		
		\bibitem {FANG2} X.G. Fang, M.Y. Xu, Abelian 3-DCI groups of odd order, Ars Combin., 1988, 28:247-251.
		
		\bibitem {FANG3} X.G. Fang, Abelian 3-DCI groups, Ars Combin., 1992, 32:263-267.
		
		\bibitem{FENG} Y.Q. Feng, I. Kov\'acs, Elementary abelian groups of rank 5 are DCI-groups, J. Comb. Theory Ser. A, 2018, 157:162-204.
		
		\bibitem{GOD} C.D. Godsil,  On Cayley graph isomorphisms, Ars Combin., 1983, 15:231-246.
		
		\bibitem {HIR} M. Hirasaka, M. Muzychuk, An elementary abelian group of rank 4 is a CI-group, J. Comb. Theory Ser. A, 2001, 94:339-362.
		
		\bibitem{HB}  B. Huppert, \textit{Endliche Gruppen I}, Springer-Verlag, Berlin, Heidelberg, New York, 1967.
		
		\bibitem {KOV1} I. Kov\'acs, M. Muzychuk, The group $Z_{p}^2\times Z_q$ is a CI-group, Commun. Algebra, 2009, 37:3500-3515.
		
		\bibitem{KOV2} I. Kov\'acs, G. Ryabov, CI-property for decomposable Schur rings over an abelian group, Algebra Colloq., 2019, 26:147-160.
		
		\bibitem {KOV3} I. Kov\'acs, G. Ryabov, The group $C_{p}^4\times C_q$ is a DCI-group, Discrete Math., 2022, 345:112705.
		
		\bibitem{KOV4} I. Kov\'acs, L. \v{S}inkovec, Cyclic $m$-DCI-groups and $m$-CI-groups, Art Discrete Appl. Math., 2025, 8(1):B72.
		
		\bibitem{LI1} C.H. Li, Isomorphisms and classification of Cayley graphs of small valencies on finite abelian groups, Australas. J. Combin., 1995, 12:3-14.
		
		\bibitem{LI2} C.H. Li, The Cyclic groups with the m-DCI Property, European J. Combin., 1997, 18:655-665.
		
		\bibitem{LI3} C.H. Li,  C.E.Praeger,  On finite groups in which any two elements of the same order are fused or
		inverse-fused, Comm. Algebra, 1997, 25:3081-3118.
		
		\bibitem{LI4} C.H. Li, C.E.Praeger, M.Y. Xu, Isomorphisms of finite Cayley digraphs of bounded valency, J.
		Combin. Theory Ser. B, 1998, 73:164-183.
		
		\bibitem{LI5} C.H. Li, C.E. Praeger,  M.Y. Xu, On finite groups with the Cayley isomorphism property, J. Graph
		Theory, 1998, 28:21-31.
		
		\bibitem{LI6} C.H. Li, Isomorphisms of finite Cayley digraphs of bounded valency II, J. Combin. Theory Ser. A, 1999, 88:19-35.
		
		\bibitem{LI7} C.H.  Li,  Finite CI-groups are soluble, Bull. Lond. Math. Soc., 1999, 31:419-423.
		
		\bibitem{LI8}  C.H.  Li,  C.E. Praeger,  On the isomorphism problem for finite Cayley graphs of bounded valency,
		European J. Combin., 1999, 20:279-292.
		
		\bibitem{LI9} C.H. Li, On isomorphisms of finite Cayley graphs--a survey, Discrete Math., 2002, 256:301-334.
		
		
		\bibitem{LI10}C.H. Li, Z.P. Lu, P.P. P\'alfy, Further restrictions on the structure of finite CI-groups, J. Algebraic Combin., 2007, 26:161-181.
		
		\bibitem{MA} H.C. Ma, On isomorphisms of Cayley digraphs on dicyclic groups, Australas. J. Combin., 1997, 16:189-194.
		
		\bibitem{MUZ1} M. Muzychuk, \'{A}d\'am's conjecture is true in the square-free case, J. Combin. Theory Ser. A, 1995, 72:118-134.
		
		\bibitem{MUZ2} M. Muzychuk, On \'{A}d\'am's conjecture for circulant graphs, Discrete Math., 1997, 176:285-298.
		
		\bibitem{MUZ3} M. Muzychuk, An elementary abelian group of large rank is not a CI-group, Discrete Math., 2003, 264:167-185.
		
		\bibitem{P}P.P. P\'alfy, Isomorphism problem for relational structures with a cyclic automorphism, European J. Combin., 1987, 8:35-43.
		
		\bibitem{QU}H. Qu, J. Yu, On isomorphisms of Cayley digraphs on dihedral groups, Australas. J. Combin., 1997, 15:213-220.		
		
		\bibitem {RYA} G. Ryabov, The Cayley isomorphism property for the group $C_4\times C_{2p}$, Commun. Algebra, 2021, 49:1788-1804.
		
		\bibitem {SOM} G. Somlai, Elementary Abelian p-groups of rank $2p +3$ are not CI-groups, J. Algebraic Comb., 2011, 34:323-335.
		
		\bibitem{SUN} L. Sun, Isomorphisms of circulants with degree 2, J. Beijing Ins. Technol., 1984, 9:42-46.
		
		\bibitem{TOI} S. Toida, A note on \'{A}d\'{a}m's conjecture, J. Combin. Theory Ser. B, 1977, 23:239-246.
		
		\bibitem{XIE1} J.H. Xie, Y.Q. Feng, Y.S. Kwon, Dihedral groups with the m-DCI property, J. Algebraic Combin., 2024, 60:73-86.
		
		\bibitem{XIE2} J.H. Xie, Z.P. Lu, Y.Q. Feng, On isomorphisms of tetravalent cayley digraphs over dihedral groups, J. Algebraic Comb., 2025, 62:24-39.
		
		\bibitem{XIE3} J.H. Xie, Y.Q. Feng, B. Xia, Generalized quaternion groups with the $m$-DCI property, Electron. J. Combin., 2025, 32(2):P2.47.

	\bibitem{XIE4} J.H. Xie, Z. Zhang, Isomorphisms of bi-Cayley graphs on generalized quaternion groups. Commun.  Algebra, 2026, 54(2):459-470
		
		
		
		
		
		
		
		
	\end{thebibliography}

\end{document}